\date{}
\newtheorem{theorem}{Theorem}
\newtheorem{conjecture}{Conjecture}
\begin{document}
\begin{frontmatter}
\title{Interval edge-colorings of $K_{1,m,n}$}

\author[ju]{A. Grzesik}
\ead{Andrzej.Grzesik@uj.edu.pl}
\author[ysu]{H. Khachatrian}
\ead{hrant@egern.net}

\address[ju]{Theoretical Computer Science Department, Faculty of Mathematics and
Computer Science, Jagiellonian University, ul.~Prof.~St.~Łojasiewicza 6, 30-348
Kraków, Poland}
\address[ysu]{Department of Informatics and Applied Mathematics, Yerevan State
University, Yerevan, 0025, Armenia}

\begin{abstract}
An edge-coloring of a graph $G$ with colors $1,\ldots,t$ is an interval
$t$-coloring if all colors are used, and the colors of edges incident to each
vertex of $G$ are distinct and form an interval of integers. A graph $G$ is
interval colorable if it has an interval $t$-coloring for some positive integer
$t$. In this note we prove that $K_{1,m,n}$ is interval colorable if and only if
$\gcd(m+1,n+1)=1$, where $\gcd(m+1,n+1)$ is the greatest common divisor of $m+1$
and $n+1$. It settles in the affirmative a conjecture of Petrosyan.
\end{abstract}

\end{frontmatter}

\section{Introduction}\

All graphs in this paper are finite, undirected and have no loops or multiple
edges. Let $V(G)$ and $E(G)$ denote the sets of vertices and edges of a graph
$G$, respectively. The degree of a vertex $v \in V(G)$ is denoted by $d(v)$, the
maximum degree of $G$ by $\Delta(G)$ and the edge-chromatic number of $G$ by
$\chi'(G)$. The terms, notations and concepts that we do not define can be found
in \cite{West}.

A \emph{proper edge-coloring} of graph $G$ is a coloring of the edges of $G$
such that no two adjacent edges receive the same color. If $\alpha$ is a proper
coloring of $G$ and $v \in V(G)$, then $S(v,\alpha)$ (\emph{spectrum} of a
vertex $v$) denotes the set of colors of edges incident to $v$. A proper
edge-coloring of a graph $G$ with colors $1,\ldots,t$ is an \emph{interval
$t$-coloring} if all colors are used, and for any vertex $v$ of $G$, the set
$S(v, \alpha)$ is an interval of integers. A graph $G$ is \emph{interval
colorable} if it has an interval $t$-coloring for some positive integer $t$. The
set of all interval colorable graphs is denoted by $\mathfrak{N}$. For a graph
$G \in \mathfrak{N}$, the least and the greatest values of $t$ for which $G$ has
an interval $t$-coloring are denoted by $w(G)$ and $W(G)$, respectively.

The concept of interval edge-coloring was introduced by Asratian and Kamalian
\cite{AsratianKamalian1987}. In
\cite{AsratianKamalian1987,AsratianKamalian1994}, they proved that if $G$ is
interval colorable, then $\chi'(G)=\Delta(G)$. Moreover, they also showed that
if $G$ is a triangle-free graph and $G\in\mathfrak{N}$, then $W(G) \leq
|V(G)|-1$. In \cite{Kamalian1989}, Kamalian investigated interval edge-colorings
of complete bipartite graphs and trees. Later, Kamalian \cite{Kamalian1990}
showed that if $G$ is a connected graph and $G \in\mathfrak{N}$, then $W(G) \leq
2|V(G)|-3$. This upper bound was improved by Giaro, Kubale and Malafiejski in
\cite{GiaroKubMal2001}, where they proved that if $G$ ($|V(G)| \geq 3$) is a
connected graph and $G \in\mathfrak{N}$, then $W(G) \leq 2|V(G)|-4$. Recently,
Kamalian and Petrosyan \cite{KamalianPetrosyan2012} showed that if $G$ is a
connected $r$-regular graph ($|V(G)| \geq 2r+2$) and $G \in\mathfrak{N}$, then
$W(G) \leq 2|V(G)|-5$. Interval edge-colorings of planar graphs were considered
by Axenovich in \cite{Axenovich2002}, where she proved that if $G$ is a
connected planar graph and $G\in\mathfrak{N}$, then $W(G) \leq
\frac{11}{6}|V(G)|$. In
\cite{Petrosyan2010}, Petrosyan investigated interval colorings of complete
graphs and $n$-dimensional cubes. In particular, he proved that if
$n\leq t\leq \frac{n\left(n+1\right)}{2}$, then the $n$-dimensional
cube $Q_{n}$ has an interval $t$-coloring. Recently, Petrosyan,
the second author and Tananyan \cite{PetrosyanKhachatrianTananyan2013} showed that the
$n$-dimensional
cube $Q_{n}$ has an interval $t$-coloring if and only if $n\leq
t\leq \frac{n\left(n+1\right)}{2}$. In \cite{Sevastjanov1990}, Sevast'janov
proved that it is an $NP$-complete problem to decide whether a
bipartite graph has an interval coloring or not.

Interval edge-colorings of some special cases of complete multipartite graphs
were first considered by Kamalian in \cite{Kamalian1989}, where he proved the
following
\begin{theorem}
For any $m,n\in \mathbb{N}$, $K_{m,n} \in \mathfrak{N}$ and
\begin{description}
\item[(i)] $w(K_{m,n}) = m+n-\gcd(m,n)$
\item[(ii)] $W(K_{m,n}) = m+n-1$
\item[(iii)] if $w(K_{m,n}) \leq t \leq W(K_{m,n})$, then $K_{m,n}$ has an
interval $t$-coloring.
\end{description}
\end{theorem}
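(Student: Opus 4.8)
The plan is to treat the three parts separately, proving (ii) and the upper half of (i) by explicit colourings and the lower half of (i) by a generating-function identity, after which (iii) interpolates between the two extreme colourings. For (ii), the bound $W(K_{m,n})\le m+n-1$ is immediate: $K_{m,n}$ is bipartite, hence triangle-free, has $m+n$ vertices and lies in $\mathfrak{N}$, so the Asratian--Kamalian estimate $W(G)\le|V(G)|-1$ applies. To see it is attained, write $U=\{u_1,\dots,u_m\}$, $V=\{v_1,\dots,v_n\}$ and colour $u_iv_j$ by $i+j-1$. This is proper, the spectrum of $u_i$ is the interval $[i,i+n-1]$ and that of $v_j$ is $[j,j+m-1]$, and the colours used are exactly $1,\dots,m+n-1$; hence $W(K_{m,n})=m+n-1$.

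Put $d=\gcd(m,n)$, $m=dm'$, $n=dn'$ with $\gcd(m',n')=1$. For the upper estimate $w(K_{m,n})\le m+n-d$ I would blow up the staircase: index $U$ by pairs $(k,s)$ and $V$ by pairs $(l,s')$ with $0\le k\le m'-1$, $0\le l\le n'-1$, $0\le s,s'\le d-1$, and colour the edge between $(k,s)$ and $(l,s')$ by $(k+l)d+\big((s+s')\bmod d\big)+1$. The outer index $k+l$ runs a staircase on the coprime pair $K_{m',n'}$, while inside each super-edge a $K_{d,d}$ Latin square fills one block of $d$ consecutive colours. One checks that the spectrum of $(k,s)$ is $[kd+1,kd+n]$ and that of $(l,s')$ is $[ld+1,ld+m]$, both genuine intervals, and that the colours used are precisely $1,\dots,(m'+n'-1)d=m+n-d$; this is an interval $(m+n-d)$-colouring.

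The heart of the matter is the matching lower bound $w(K_{m,n})\ge m+n-d$, and this is where I would invest the most care. Given any interval $t$-colouring, write $S(u_i)=[a_i,a_i+n-1]$ and $S(v_j)=[b_j,b_j+m-1]$; let $L(c)$, $M(c)$ count the vertices of $U$, $V$ whose spectrum begins at colour $c$, and set $\hat L(z)=\sum_c L(c)z^c$, $\hat M(z)=\sum_c M(c)z^c$. Counting, for each colour $c$, the edges of colour $c$ from each side gives $|\{i:c\in S(u_i)\}|=|\{j:c\in S(v_j)\}|$, which in generating-function form is
\[ \hat L(z)\,(z^n-1)=\hat M(z)\,(z^m-1). \]
Since $\gcd(z^n-1,z^m-1)=z^d-1$, the cofactors $P=(z^n-1)/(z^d-1)$ and $Q=(z^m-1)/(z^d-1)$ are coprime, so $Q\mid\hat L$ and we may write $\hat L=QR$, $\hat M=PR$ for a single polynomial $R$. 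Comparing top degrees gives $\max_i a_i=\deg\hat L=(m-d)+\deg R$, whence $t\ge\max_i a_i+n-1=m+n-d-1+\deg R$; comparing lowest degrees gives $\operatorname{low}(R)=\min_i a_i\ge1$, so $\deg R\ge1$ and therefore $t\ge m+n-d$.

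Finally, (iii) asks for colourings for every $t$ with $m+n-d\le t\le m+n-1$, that is, $d$ values in all. The degree computation above isolates the free parameter: a colouring with $t=m+n-d-1+r$ colours corresponds to a choice of $R$ with $\operatorname{low}(R)=1$, $R(1)=d$, nonnegative coefficients and $\deg R=r$ for $1\le r\le d$, the cases $r=1$ and $r=d$ being the two colourings above. I would realise each intermediate $R$ by the same blow-up philosophy, replacing the uniform width-$d$ Latin blocks by blocks of varying widths that abut without gaps, or equivalently by feeding an interval $t_0$-colouring of $K_{d,d}$ (which exists for each $d\le t_0\le 2d-1$) into selected super-edges. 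The main obstacle I expect is exactly here: producing, for each intermediate $r$, an explicit proper edge-colouring that realises the prescribed spectrum multiplicities \emph{while keeping every vertex spectrum contiguous}. Existence of a colouring with given multiplicities is a Hall/flow-type statement, but arranging contiguity across adjacent blocks seems to demand a carefully case-checked assignment rather than a single closed formula.
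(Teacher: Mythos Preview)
The paper does not prove this theorem at all: it is quoted as a background result of Kamalian (reference \cite{Kamalian1989} in the paper), stated without proof in the introduction. So there is no ``paper's own proof'' to compare your proposal against; what you have written is an independent argument for a result the authors merely cite.

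On the substance of your proposal: your treatment of (ii) and of both inequalities in (i) is correct. The explicit colourings are the standard ones (indeed your $(m+n-1)$-colouring is exactly the $\alpha_{m,n}$ the paper later uses), and the generating-function proof of $w(K_{m,n})\ge m+n-d$ via $\hat L(z)(z^n-1)=\hat M(z)(z^m-1)$ and $\gcd(z^m-1,z^n-1)=z^d-1$ is clean and fully rigorous. The one genuine gap is (iii), and you identify it yourself: you have not actually produced, for each $r$ with $1<r<d$, a proper edge-colouring realising the target spectra. Your remark that it ``corresponds to a choice of $R$'' is suggestive but not a construction---an $R$ with the right degree, low term, value at $1$, and nonnegative coefficients does not by itself yield a proper colouring, and your fallback of feeding interval $t_0$-colourings of $K_{d,d}$ into super-edges is close to circular (it presupposes (iii) for $K_{d,d}$) and in any case you have not specified how adjacent blocks are aligned so that vertex spectra stay contiguous. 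If you want a self-contained write-up you will need either an explicit formula for the intermediate colourings or a careful inductive/recursive construction; as it stands, (iii) is a sketch rather than a proof.
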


Also, he showed that complete graphs are interval colorable if and only if the
number of vertices is even. Moreover, for any $n\in\mathbb{N}$, $w(K_{2n}) =
2n-1$. For a lower bound on $W(K_{2n})$, Kamalian obtained the following result:
\begin{theorem}
For any $n\in \mathbb{N}$, $W(K_{2n}) \geq 2n-1 + \lfloor \log_2{(2n-1)}
\rfloor$.
\end{theorem}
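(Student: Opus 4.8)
The plan is to prove the bound constructively by induction on $n$, producing for each $K_{2n}$ an interval coloring that uses at least $2n-1+\lfloor\log_2(2n-1)\rfloor$ colors. The mechanism is a vertex split: partition $V(K_{2n})$ into two sets inducing complete graphs, color them in two disjoint blocks of the color range, one pushed to the bottom and one to the top, and fill the complete bipartite graph joining them with the colors in between. The point of placing the three parts in consecutive color blocks is that the spectrum of every vertex is the concatenation of its spectrum inside its own part with the colors it receives on the bridge, so the interval property is preserved as long as the bridge extends each vertex's interval contiguously toward the block of the opposite side.

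First I would isolate the base cases $K_2$ (one color) and $K_4$ (an explicit $4$-coloring), and set $n'=\lceil n/2\rceil$. The arithmetic backbone is the identity $\lfloor\log_2(2n-1)\rfloor=\lfloor\log_2(2n'-1)\rfloor+1$, valid for $n\ge 2$, which reduces the claim to the coloring recursion $W(K_{2n})\ge W(K_{2n'})+(2n-2n')$; since $2n-2n'$ equals $n$ for even $n$ and $n-1$ for odd $n$, iterating this recursion from the base cases yields the logarithmic term one unit at a time. The balanced case $n$ even is the clean one: split into halves $X,Y$ of size $n$, color $K_X$ by a $1$-factorization in colors $[1,n-1]$, color $K_Y$ by a $1$-factorization shifted into $[2n,3n-2]$, and color the balanced bridge $K_{n,n}$ by the cyclic rule shifted into $[n,2n-1]$, under which every vertex sees the whole middle block. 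Every spectrum is then an interval of length $2n-1$, so this already gives $W(K_{2n})\ge 3n-2$, which dominates the claimed bound for even $n$.

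The main obstacle is the unbalanced reduction forced by odd $n$, where $X$ and $Y$ must have distinct even sizes $2a\ne 2b$. A counting argument shows that in a proper coloring of $K_{2a,2b}$ the vertices of at least one side cannot all share a single interval as their spectrum, so the clean block-with-uniform-bridge construction breaks down; instead I would feed the bridge a \emph{staggered} interval coloring of $K_{2a,2b}$, which exists by the theorem on $K_{m,n}$ quoted above, and then solve the genuine difficulty: aligning the staggered spectra of the bridge with the two complete-graph blocks so that, simultaneously, each left vertex's interval closes up on its right and each right vertex's interval closes up on its left. I expect this alignment, together with the floor-function and parity bookkeeping needed to keep the recursion's increment exactly $2n-2n'$ at every level, to be where essentially all of the work lies; checking small instances such as $K_4$ and $K_6$ by hand is a useful guide for getting the staggering right.
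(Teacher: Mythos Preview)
The paper does not prove this statement; it is quoted in the introduction as a result of Kamalian and given without argument, so there is no in-paper proof to compare against. Evaluating your proposal on its own merits, there is a concrete arithmetic gap in the reduction you set up. You assert that the identity $\lfloor\log_2(2n-1)\rfloor=\lfloor\log_2(2n'-1)\rfloor+1$ reduces the theorem to the recursion $W(K_{2n})\ge W(K_{2n'})+(2n-2n')$, but substituting the inductive hypothesis into that recursion yields only
\[
W(K_{2n})\;\ge\;\bigl(2n'-1+\lfloor\log_2(2n'-1)\rfloor\bigr)+(2n-2n')\;=\;2n-1+\lfloor\log_2(2n-1)\rfloor-1,
\]
one short of the target. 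Concretely, for $n=3$ your recursion starting from $W(K_4)\ge 4$ gives $W(K_6)\ge 6$, whereas the theorem asserts $W(K_6)\ge 7$. The recursion that actually matches the bound is $W(K_{2n})\ge W(K_{2n'})+(2n-2n')+1$, and that extra $+1$ is not free: it forces the bridge-plus-second-block portion of the construction to contribute one more color than the naive degree count, which changes what you must build.

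Your even-$n$ construction (two $1$-factorizations of $K_n$ flanking a cyclically colored $K_{n,n}$) is correct and does give $W(K_{2n})\ge 3n-2$, which indeed dominates the Kamalian bound when $n$ is even. But for odd $n$ you do not supply a construction at all: you identify the alignment of a staggered $K_{2a,2b}$ bridge with the two complete blocks as ``where essentially all of the work lies'' and leave it there. That deferred step \emph{is} the entire proof in the odd case, and because of the off-by-one above it must produce strictly more colors than your stated recursion promises. As written, then, the proposal handles even $n$ but for odd $n$ is a plan with an arithmetic target that is too weak and a construction that is not yet given.
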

Later, Petrosyan \cite{Petrosyan2010} improved this lower bound for $W(K_{2n})$:
\begin{theorem}
If $n=p2^{q}$, where $p$ is odd and $q$ is nonnegative, then
\begin{center}
$W\left(K_{2n}\right)\geq 4n-2-p-q$.
\end{center}
\end{theorem}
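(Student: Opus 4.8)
The plan is to prove the lower bound constructively, by exhibiting an interval coloring of $K_{2n}$ that uses $4n-2-p-q$ colors, organized recursively on the exponent $q$. I would isolate two ingredients. First, a \emph{base case} handling $q=0$: for odd $p$ I would build an interval coloring of $K_{2p}$ with exactly $3p-2=4p-2-p$ colors. Second, a \emph{doubling step}: from an interval $t$-coloring of $K_N$ with $N$ even I would produce an interval $(t+2N-1)$-coloring of $K_{2N}$. Granting these, the theorem follows by starting from $K_{2p}$ and doubling $q$ times to reach $K_{2p\cdot 2^q}=K_{2n}$; a short induction confirms that the color counts telescope to $4n-2-p-q$, since each doubling of the vertex set adds $2N-1=\Delta(K_{2N})$ new colors while raising the effective exponent by one.

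For the base case I would use a \emph{staggered} family of spectra. Partition the $2p$ vertices into $p$ pairs and assign to the $k$-th pair ($k=0,\dots,p-1$) the target interval $[k+1,\,k+2p-1]$ of length $2p-1=\Delta(K_{2p})$, so that the largest color is $3p-2$ and every color in $[1,3p-2]$ is used. A convenient feature of pairing is that, for every color $c$, the number of vertices whose target interval contains $c$ is automatically even, so each color class can in principle be realized as a perfect matching on those vertices. The real work is to produce an actual proper edge-coloring realizing all these spectra simultaneously: I would define it by an explicit rule, coloring the edges between the $k$-th and $\ell$-th pairs by a formula in $k+\ell$ with the intra- and inter-pair cases distinguished, and then verify properness and that each vertex sees precisely its prescribed interval. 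I expect the oddness of $p$ to be what makes this pairwise round-robin pattern close up consistently.

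For the doubling step I would split $V(K_{2N})=A\sqcup B$ with $|A|=|B|=N$, color the clique on $A$ by the given coloring with colors $[1,t]$ and the clique on $B$ by a shifted copy, and color the bipartite part $K_{N,N}$ by a staircase pattern $\phi(a_i,b_j)$ depending on $i+j$, so that along each row and each column the used colors form a block of $N$ consecutive integers. The point is to line up these blocks so that the $N$ bipartite colors at $a_i$ continue its internal interval $[l_i,r_i]$ upward to $[l_i,r_i+N]$, while the $N$ bipartite colors at $b_j$ continue the shifted internal interval of $B$ downward; each vertex then sees an interval of length $2N-1$.

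The main obstacle is precisely this alignment: the internal intervals $[l_i,r_i]$ coming from the smaller coloring need not be uniform, so a single fixed staircase on $K_{N,N}$ cannot simultaneously extend every $a_i$ and every $b_j$ to an interval while remaining proper. I expect to resolve this by \emph{strengthening the induction hypothesis}: rather than asserting only the existence of some interval $t$-coloring of $K_N$, I would carry along structural control of the spectra, for instance that the right endpoints $r_i$ sweep out a full range of consecutive values, which is exactly what a staircase bipartite coloring needs in order to interlock. Verifying that both the base construction and the doubling step preserve this strengthened invariant, and that the shift of the $B$-clique and the staircase offset can be chosen compatibly, is the technical heart of the argument; the correction term $-p-q$ should reflect the unavoidable overlaps forced when the swept range is shorter than the full color interval.
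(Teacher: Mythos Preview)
The paper does not contain a proof of this theorem: it is quoted in the introductory survey as a result of Petrosyan (reference \cite{Petrosyan2010}), with no argument supplied. There is therefore no proof in the present paper against which your proposal can be compared.

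As to the proposal on its own terms: the two-ingredient scheme --- a base interval $(3p-2)$-coloring of $K_{2p}$ for odd $p$, followed by $q$ doublings each adding $2N-1$ colors --- is the natural strategy, and the arithmetic does telescope to $4n-2-p-q$. But what you have written is, by your own description, a plan rather than a proof. You identify the alignment of spectra in the doubling step as ``the main obstacle'' and propose to resolve it by an as-yet-unspecified strengthening of the induction hypothesis; until that invariant is stated and both the base construction and the doubling step are verified to preserve it, the argument has a genuine gap precisely where the work lies. The base case is likewise only gestured at: ``an explicit rule \dots\ in $k+\ell$'' and ``I expect the oddness of $p$ to be what makes this \dots\ close up consistently'' are not yet a coloring, and properness together with the exact spectrum at every vertex must be checked. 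In short, the outline is sound and consistent with how such bounds are typically obtained, but neither ingredient is actually established in the proposal.
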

In the same paper he also conjectured that this lower bound is the exact value
of $W(K_{2n})$. He verified this conjecture for $n \leq 4$, but the conjecture
was disproved by the second author in \cite{Khachatrian2012}.

Another special case of complete multipartite graphs was considered by Feng and
Huang in \cite{FengHuang2007}, where they proved the following
\begin{theorem}
\label{FengHuang}
For any $n\in\mathbb{N}$, $K_{1,1,n}\in \mathfrak{N}$ if and only if $n$ is
even.
\end{theorem}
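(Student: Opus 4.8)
The plan is to handle the two directions separately, since they are quite different in character. Throughout, write $a,b$ for the two vertices forming the singleton parts and $v_1,\ldots,v_n$ for the vertices of the remaining part, so that $d(a)=d(b)=n+1$, each $d(v_i)=2$, and $\Delta(K_{1,1,n})=n+1$. In any interval coloring $\alpha$, each $v_i$ has degree $2$, so its two incident edges $av_i$ and $bv_i$ receive consecutive colors; set $x_i=\alpha(av_i)$ and $y_i=\alpha(bv_i)$, so $|x_i-y_i|=1$. Since $S(a,\alpha)$ and $S(b,\alpha)$ are intervals of length $n+1$, I would write $S(a,\alpha)=\{p,p+1,\ldots,p+n\}$ and $S(b,\alpha)=\{q,q+1,\ldots,q+n\}$, and let $\gamma=\alpha(ab)$.

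For the necessity direction I would extract a parity invariant. The colors $x_1,\ldots,x_n$ together with $\gamma$ exhaust $S(a,\alpha)$, so $\sum_i x_i=\sum_{j=0}^{n}(p+j)-\gamma=(n+1)p+\binom{n+1}{2}-\gamma$, and symmetrically $\sum_i y_i=(n+1)q+\binom{n+1}{2}-\gamma$. Subtracting gives $\sum_i (x_i-y_i)=(n+1)(p-q)$. On the other hand each $x_i-y_i=\pm 1$, so $\sum_i(x_i-y_i)$ is a sum of $n$ odd numbers and is congruent to $n$ modulo $2$. Hence $(n+1)(p-q)\equiv n\pmod 2$; when $n$ is odd the left side is even while the right side is odd, a contradiction. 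This shows $K_{1,1,n}\notin\mathfrak{N}$ whenever $n$ is odd.

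For sufficiency, when $n=2k$ is even I would exhibit an explicit interval $(n+1)$-coloring. Color $av_i$ with $i$ for $1\le i\le 2k$; color $bv_{2j-1}$ with $2j$ and $bv_{2j}$ with $2j-1$ for $1\le j\le k$ (so the $b$-side colors swap within each consecutive pair); and color $ab$ with $2k+1$. Then each $v_i$ sees a consecutive pair $\{2j-1,2j\}$, the edges at $a$ use the colors $\{1,\ldots,2k\}$ while $ab$ uses $2k+1$, giving $S(a,\alpha)=\{1,\ldots,2k+1\}$, and the $b$-side colors are a permutation of $\{1,\ldots,2k\}$, giving $S(b,\alpha)=\{1,\ldots,2k+1\}$; all colors $1,\ldots,2k+1$ appear. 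A short check confirms the coloring is proper and every spectrum is an interval, so $K_{1,1,n}\in\mathfrak{N}$.

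I expect the main obstacle to be the necessity direction, specifically discovering the right invariant: the even-$n$ construction is routine, but ruling out odd $n$ requires noticing that summing the signed differences $x_i-y_i$ over the large part simultaneously yields the telescoping interval quantity $(n+1)(p-q)$ and, independently, carries the parity of $n$. Once this double count is in place the contradiction is immediate, so the real work lies in recognizing that the consecutive-pair constraint at the degree-$2$ vertices interacts with the interval condition at $a$ and $b$ through exactly this parity.
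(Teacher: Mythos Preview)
Your proof is correct. The paper does not prove this theorem directly---it is quoted from Feng and Huang---but its two main theorems specialize to it, and those arguments differ from yours in both directions. For necessity, the paper sets $d=\gcd(m+1,n+1)$ and counts the edges whose color is a multiple of $d$: summing over all vertices must give an even number, yet the computation yields $\frac{2(m+1)(n+1)}{d}-1$, which is odd. Your signed-difference invariant $\sum_i(x_i-y_i)=(n+1)(p-q)$ is a clean device tailored to $m=1$, exploiting the degree-$2$ vertices directly; it is arguably slicker here but does not extend to general $m$, whereas the $d$-edge count does. For sufficiency, the paper extends the standard coloring $\alpha_{m,n}$ of $K_{m,n}$ by encoding the admissible colors for the edges at $w$ as an auxiliary bipartite graph and proving it has a perfect matching whenever $\gcd(m+1,n+1)=1$; your explicit pair-swapping formula is exactly the elementary construction one finds by hand when $m=1$, avoiding the matching machinery altogether. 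In short, your arguments are the natural ad hoc ones for $K_{1,1,n}$, while the paper's are built to scale to $K_{1,m,n}$.
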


Recently, Petrosyan investigated interval edge-colorings of complete
multipartite graphs. In particular, he proved \cite{Petrosyan2012} the following
result:
\begin{theorem}
If $K_{n,\ldots,n}$ is a complete balanced $k$-partite graph, then
$K_{n,\ldots,n} \in \mathfrak{N}$ if and only if $nk$ is even. Moreover, if $nk$
is even, then $w(K_{n,\ldots,n}) = n(k-1)$ and $W(K_{n,\ldots,n}) \geq \left(
\frac{3}{2}k-1 \right)n-1$.
\end{theorem}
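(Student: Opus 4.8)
The plan is to handle the three assertions in turn, tying the two constructive parts together through a single blow-up operation. Write $\Gamma=K_{n,\ldots,n}$, so that $\Gamma$ is regular of degree $\Delta=n(k-1)$ on $nk$ vertices, and for a graph $H$ let $H[\overline{K_m}]$ be the graph obtained by replacing each vertex of $H$ with $m$ pairwise non-adjacent copies and joining two copies exactly when their originals are adjacent. I would exploit the two isomorphisms
$$\Gamma \;\cong\; K_k[\overline{K_n}] \;\cong\; K_{k\times 2}[\overline{K_{n/2}}],$$
where $K_{k\times 2}$ is the cocktail-party graph (complete $k$-partite with all parts of size $2$) and the second isomorphism requires $n$ even. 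The forward implication of the equivalence is immediate: if $\Gamma\in\mathfrak{N}$ then by the Asratian--Kamalian theorem \cite{AsratianKamalian1987} we have $\chi'(\Gamma)=\Delta$, and since $\Gamma$ is $\Delta$-regular this forces $E(\Gamma)$ to split into $\Delta$ perfect matchings; a perfect matching needs an even number of vertices, so $nk$ must be even.

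Next I would prove the converse together with the value $w(\Gamma)=n(k-1)$ by a single $1$-factorization argument. The key remark is that if $H$ is $1$-factorable then so is $H[\overline{K_m}]$: each factor of $H$ (a perfect matching) blows up into a disjoint union of copies of $K_{m,m}$ spanning all vertices, and $1$-factorizing each $K_{m,m}$ into $m$ perfect matchings yields $m\,\Delta(H)=\Delta(H[\overline{K_m}])$ perfect matchings of $H[\overline{K_m}]$. A $1$-factorization is precisely an interval $\Delta$-coloring, so this shows $H[\overline{K_m}]\in\mathfrak N$ with $w\le\Delta$. To cover every case with $nk$ even I would choose the base graph by the parity of $n$: if $n$ is odd then $k$ is even and $H=K_k$ is $1$-factorable (even order); if $n$ is even then $H=K_{k\times 2}$ is $1$-factorable for any $k$, since it is $K_{2k}$ with a perfect matching deleted and any prescribed perfect matching of $K_{2k}$ extends to a $1$-factorization. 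In both cases the resulting interval $\Delta$-coloring of $\Gamma$ gives $\Gamma\in\mathfrak N$ and $w(\Gamma)\le\Delta=n(k-1)$, while $w(\Gamma)\ge\Delta$ holds for every graph.

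For the lower bound $W(\Gamma)\ge\bigl(\tfrac32 k-1\bigr)n-1$ I would instead spread the colors. The composition lemma here is that if $H$ has an interval $t$-coloring $\beta$, then $H[\overline{K_m}]$ has an interval coloring with $m(t+1)-1$ colors, given on the edge joining the $i$-th copy of $x$ to the $j$-th copy of $y$ (with $xy\in E(H)$, $1\le i,j\le m$) by
$$\alpha = m\bigl(\beta(xy)-1\bigr)+i+j-1,$$
since a direct check shows every spectrum is an interval of the right length and all colors occur. Maximizing over $\beta$ gives $W\bigl(H[\overline{K_m}]\bigr)\ge m\bigl(W(H)+1\bigr)-1$. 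When $k$ is even, take $H=K_k$, $m=n$: the bound $W(K_{2\kappa})\ge 4\kappa-2-p-q$ of \cite{Petrosyan2010} with $\kappa=k/2=p2^{q}$ ($p$ odd) gives $W(K_k)\ge\tfrac32 k-2$, because $p2^{q}\ge p+q$, and substituting yields exactly $\bigl(\tfrac32 k-1\bigr)n-1$. When $k$ is odd, so $n=2m$ is even, the same computation with $H=K_{k\times 2}$ reduces the claim to exhibiting an interval coloring of the cocktail-party graph $K_{k\times 2}$ that uses at least $3k-3$ colors.

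The crux, and the step I expect to absorb most of the effort, is exactly this last construction. For even $k$ one leans on the complete-graph bound, but $K_k$ is not even interval colorable for odd $k$, so the cocktail-party coloring cannot be produced by a further blow-up and must be built by hand. The obstacle is the odd-triangle parity phenomenon (the same one that restricts $K_{1,1,n}$ to even $n$ in Theorem~\ref{FengHuang}): one cannot merely stack shifted bipartite colorings around the $k$ parts without creating some vertex whose spectrum has a gap. I would aim to assign the $2k$ vertices staggered target spectra whose left endpoints rise and then fall symmetrically across the parts — the pattern that already realizes the octahedron $K_{2,2,2}=K_{3\times 2}$ with $6=3\cdot 3-3$ colors — and then verify, via an explicit rotational assignment of colors to edges, that these spectra are simultaneously realizable by a proper coloring. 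Establishing this staggered construction for all odd $k$ and confirming that it attains $3k-3$ colors is the delicate part of the argument.
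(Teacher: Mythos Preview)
The paper does not prove this theorem at all: it is quoted as a result of Petrosyan \cite{Petrosyan2012} and used only as background. There is therefore no in-paper argument to compare your proposal against.

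On its own merits, your plan is sound for the equivalence and for $w(\Gamma)=n(k-1)$: the blow-up of a $1$-factorization works exactly as you describe, and the two base graphs $K_k$ (for $k$ even) and $K_{k\times 2}$ (for $n$ even) cover all cases with $nk$ even. Your composition lemma $W(H[\overline{K_m}])\ge m\bigl(W(H)+1\bigr)-1$ is also correct, and for even $k$ the inequality $p\,2^{q}\ge p+q$ does give $W(K_k)\ge\tfrac32 k-2$ from Theorem~3, so that branch closes cleanly.

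The genuine gap is the odd-$k$ branch of the $W$ bound. You reduce it to $W(K_{k\times 2})\ge 3k-3$, but this is precisely the $n=2$ instance of the very inequality you are trying to prove, and your proposal does not establish it: you describe a heuristic (staggered spectra, rotational assignment) and verify only the octahedron $k=3$. Until an explicit coloring of $K_{k\times 2}$ with $3k-3$ colors is written down and checked for all odd $k$, the argument is incomplete. Since the paper offers no proof here, you would need to consult \cite{Petrosyan2012} for the intended construction or supply one yourself.
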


In "Cycles and Colorings 2012" workshop Petrosyan presented several conjectures
on interval edge-colorings of complete multipartite graphs. In particular, he
posed the following
\begin{conjecture}
For any $m,n\in\mathbb{N}$, $K_{1,m,n}\in\mathfrak{N}$ if and only if
$\gcd(m+1,n+1)=1$.
\end{conjecture}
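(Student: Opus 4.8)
The plan is to prove both directions by analysing the colours that an interval colouring must place on the edges incident to the vertex $u$ that forms the part of size $1$. Write $A=\{x_1,\dots,x_m\}$ and $B=\{y_1,\dots,y_n\}$ for the other two parts, so that $d(u)=m+n=\Delta(K_{1,m,n})$, $d(x_i)=n+1$ and $d(y_j)=m+1$. In any interval colouring the spectrum $S(u)$ is an interval of $m+n$ consecutive integers, and $S(x_i)$, $S(y_j)$ are intervals of lengths $n+1$, $m+1$. Throughout set $d=\gcd(m+1,n+1)$.

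For necessity, suppose an interval colouring exists and $d>1$. Since $S(x_i)$ has length $n+1$, a multiple of $d$, it meets each residue class modulo $d$ exactly $(n+1)/d$ times; likewise each $S(y_j)$ meets each class $(m+1)/d$ times. Fixing a residue $r$ and counting, first from the $A$-side and then from the $B$-side, the edges of the bipartite part between $A$ and $B$ whose colour lies in class $r$, I get $m(n+1)/d-\alpha_r=n(m+1)/d-\beta_r$, where $\alpha_r$ (resp.\ $\beta_r$) is the number of $x_i$ (resp.\ $y_j$) with $c(ux_i)$ (resp.\ $c(uy_j)$) in class $r$. Hence $\beta_r-\alpha_r=(n-m)/d$ for every $r$, an integer as $d\mid n-m$. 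Writing $\gamma_r$ for the number of elements of $S(u)$ in class $r$, so $\alpha_r+\beta_r=\gamma_r$, this forces $2\alpha_r=\gamma_r-(n-m)/d$. But $S(u)$ is an interval of length $m+n\equiv-2\pmod d$, so for $d\ge 3$ the $\gamma_r$ take two consecutive values of opposite parity, while for $d=2$ one computes $\alpha_r=m/2$ with $m$ odd; either way some $\alpha_r$ is not an integer, a contradiction.

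For sufficiency, assume $d=1$ and construct a colouring. Begin with the standard colouring $c(x_iy_j)=i+j-1$ of the bipartite part $K_{m,n}$, under which the colours at $x_i$ form the interval $[i,i+n-1]$ and those at $y_j$ form $[j,j+m-1]$. Extending $S(x_i)$ to length $n+1$ forces $c(ux_i)\in\{i-1,\,i+n\}$, and extending $S(y_j)$ forces $c(uy_j)\in\{j-1,\,j+m\}$; moreover these $m+n$ colours must be distinct and fill the spectrum of $u$, i.e.\ form a block of $m+n$ consecutive integers inside $\{0,1,\dots,m+n\}$. Encode this as an auxiliary graph $G^\ast$ on the node set $\{0,1,\dots,m+n\}$ in which $x_i$ contributes the edge $\{i-1,\,i+n\}$ (nodes at distance $n+1$) and $y_j$ the edge $\{j-1,\,j+m\}$ (distance $m+1$). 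A valid choice of $u$-colours is exactly an injective assignment of each edge of $G^\ast$ to one of its endpoints whose image omits only the node $0$; such an assignment exists whenever $G^\ast$ is a spanning tree, by rooting at $0$ and sending each edge to its endpoint farther from the root. Since $G^\ast$ has $m+n+1$ nodes and $m+n$ edges, being a spanning tree is equivalent to being connected.

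The heart of the matter, and the step I expect to be the main obstacle, is showing that $G^\ast$ is connected exactly when $\gcd(m+1,n+1)=1$. First note that $\{p,\,p+s\}$ with $s\in\{m+1,n+1\}$ is an edge iff both endpoints lie in $\{0,\dots,m+n\}$, so $G^\ast$ is simply the graph joining nodes at distance $m+1$ or $n+1$; if $d>1$ every edge preserves residues modulo $d$ while several classes are occupied, so $G^\ast$ is disconnected. When $d=1$ I would group the nodes by residue modulo $m+1$: the distance-$(m+1)$ edges chain each class into a single path (because the second-largest node of each class is at most $n-1$), giving $m+1$ super-nodes, and each distance-$(n+1)$ edge with lower endpoint $p\in\{0,\dots,m-1\}$ joins the super-node of $p$ to that of $p+n+1$, i.e.\ applies the shift $p\mapsto p+(n+1)\bmod(m+1)$. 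As $\gcd(m+1,n+1)=1$ this shift is a single cycle through all $m+1$ super-nodes, and exactly one shift (the one from residue $m$) is missing, so the super-nodes form a path and $G^\ast$ is connected. The only delicate points are verifying that each residue class is a single path and the bookkeeping identifying the absent shift, both of which should be routine once the set-up is in place.
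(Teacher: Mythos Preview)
Your argument is correct in both directions, but each half differs from the paper's. For necessity the paper also counts modulo $d$, but more directly: after shifting so that $S(u)=\{1,\dots,m+n\}$ it sums over all vertices the number $D(v)$ of incident edges whose colour is divisible by $d$, obtaining $\sum_v D(v)=2(m+1)(n+1)/d-1$, which is odd even though this sum must be twice the number of such edges; your double count of the bipartite edges in each residue class reaches the same parity obstruction with a bit more bookkeeping and a case split. For sufficiency both proofs extend the canonical colouring of $K_{m,n}$ and encode the two options for each $u$-edge in an auxiliary graph, but the paper disallows colour $0$ and builds a bipartite graph $H$ on edge-vertices versus colour-vertices $1,\dots,m+n$, then shows $H$ has a perfect matching via a coordinate-traversal argument proving that the two degree-$1$ edge-vertices lie in distinct path components (so both paths are odd). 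Your $G^\ast$ instead keeps colour $0$ as a node, so it has $m+n+1$ vertices and $m+n$ edges and the task reduces to showing it is a tree; your connectivity proof by contracting residue classes modulo $m+1$ into super-nodes and reading the distance-$(n{+}1)$ edges as the cyclic shift $p\mapsto p+(n{+}1)\bmod(m{+}1)$ is clean and makes the role of the coprimality hypothesis fully transparent, arguably more so than the paper's traversal.
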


In this note we prove this conjecture, which also generalizes Theorem
\ref{FengHuang}.

\section{Main result}\

We denote the bipartition of $K_{m,n}$ by $(U, V)$, where $U = \left\{ u_0, u_1,
\ldots, u_{m-1} \right\}$ and $V = \left\{ v_0, v_1, \ldots, v_{n-1} \right\}$.
The interval edge-coloring $\alpha_{m,n}$ of $K_{m,n}$ with maximum number of
colors is given the following way:
\begin{center}
$ \alpha_{m,n}(u_iv_j) = i+j+1$, where
$0 \leq i \leq m-1$, \hspace{3pt} $0 \leq j \leq n-1$.
\end{center}
$K_{1,m,n}$ is a complete tripartite graph that can be viewed as a $K_{m,n}$
plus one additional vertex connected to all other vertices. In this paper we
prove that if $m+1$ and $n+1$ are coprime, then it is possible to extend the
$\alpha_{m,n}$ coloring of $K_{m,n}$ to an interval edge-coloring of
$K_{1,m,n}$. Then we prove that if $\gcd(m+1,n+1) > 1$, then $K_{1,m,n}$ is not
interval colorable.

\begin{figure}[h]
\centering
\includegraphics[scale=0.9]{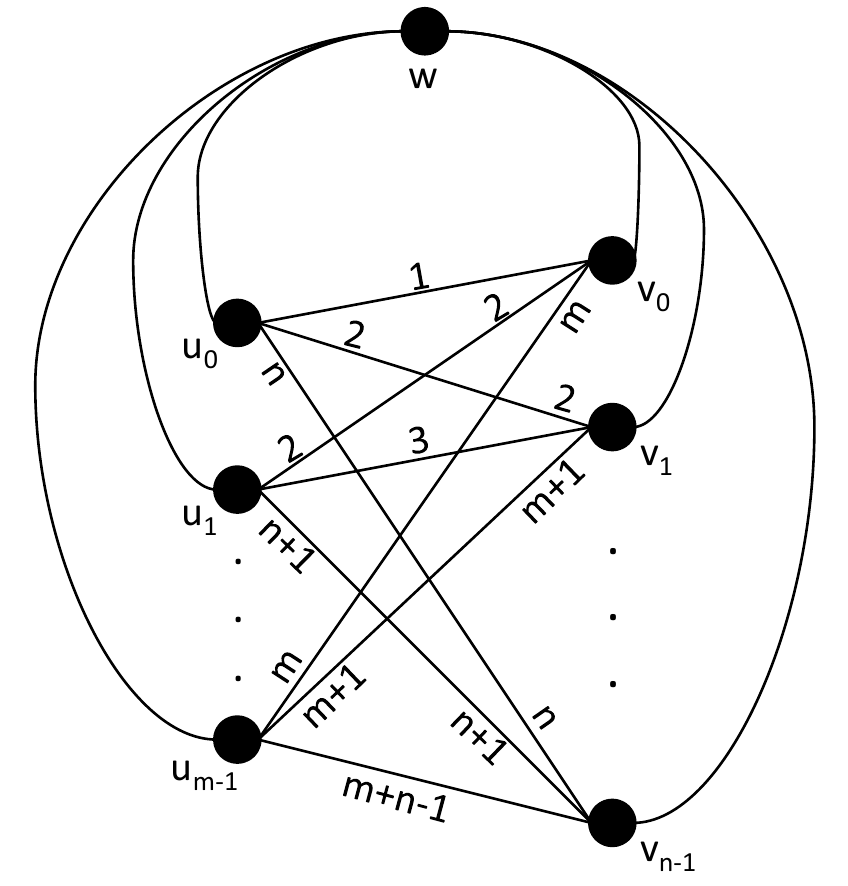}
\caption{$K_{1,m,n}$ with $\alpha_{m,n}$ coloring.}
\label{graphK1mn}
\end{figure}

Spectrums of the vertices for $\alpha_{m,n}$ coloring are the following:
\begin{center}
\begin{tabular}{cc}
$ S(u_i, \alpha_{m,n}) = \{i+1, \ldots, i+n \}$, & $0 \leq i \leq m-1$ \\
$ S(v_j, \alpha_{m,n}) = \{j+1, \ldots, j+m \}$, & $0 \leq j \leq n-1$
\end{tabular}
\end{center}

We construct $K_{1,m,n}$ by adding a new vertex $w$ to $K_{m,n}$ and joining it
with all the remaining vertices.
\begin{align*}
V(K_{1,m,n}) & = V(K_{m,n}) \cup \{w\} \\
E(K_{1,m,n}) & = E(K_{m,n}) \cup \{u_iw\ |\ 0 \leq i \leq m-1 \} \cup \{v_jw\ |\
0 \leq j \leq n-1 \}
\end{align*}

\begin{theorem}
If $\gcd(m+1,n+1)=1$, then $K_{1,m,n}$ has an interval $(m+n)$-coloring.
\end{theorem}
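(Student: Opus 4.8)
The plan is to extend the given coloring $\alpha_{m,n}$ of $K_{m,n}$ to $K_{1,m,n}$ by assigning colors only to the $m+n$ edges incident to the new vertex $w$, leaving all old colors fixed. Since $\deg(w)=m+n$ and the whole palette consists of only the colors $1,\dots,m+n$, properness forces the colors on the edges at $w$ to be pairwise distinct, hence to be a permutation of $\{1,\dots,m+n\}$; in particular $S(w,\alpha)=\{1,\dots,m+n\}$ is automatically an interval and every color is used. The interval condition at the remaining vertices pins down the admissible choices: at $u_i$ the current spectrum is $\{i+1,\dots,i+n\}$, so the edge $u_iw$ must receive color $i$ or $i+n+1$; at $v_j$ the spectrum is $\{j+1,\dots,j+m\}$, so $v_jw$ must receive $j$ or $j+m+1$. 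Thus the theorem reduces to the purely combinatorial claim: \emph{one can pick for each $i$ a value in $\{i,\,i+n+1\}$ and for each $j$ a value in $\{j,\,j+m+1\}$ so that the chosen $m+n$ values are exactly $\{1,\dots,m+n\}$.} Note that color $0$ is forbidden, which will force $u_0w=n+1$ and $v_0w=m+1$.

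To handle this I would encode it as an orientation problem. Put $p=m+1$ and $q=n+1$, and build an auxiliary graph $G^{*}$ whose vertices are the colors $0,1,\dots,p+q-2$ and whose edges are $\{i,\,i+q\}$ for $0\le i\le p-2$ (one per vertex $u_i$) and $\{j,\,j+p\}$ for $0\le j\le q-2$ (one per vertex $v_j$). Choosing the color of the $w$-edge corresponding to a given $K_{m,n}$-vertex amounts to orienting the corresponding edge of $G^{*}$ toward the endpoint (color) that is used. In this language the desired assignment is exactly an orientation of $G^{*}$ in which the vertex $0$ has in-degree $0$ and every vertex $1,\dots,p+q-2$ has in-degree $1$. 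Observe that $G^{*}$ has $p+q-1$ vertices and $(p-1)+(q-1)=p+q-2$ edges, one fewer than its number of vertices.

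The key step is to show that $G^{*}$ is connected precisely when $\gcd(p,q)=1$; together with the edge count this makes $G^{*}$ a tree. Rooting this tree at the vertex $0$ and orienting every edge away from the root yields in-degree $0$ at $0$ and in-degree $1$ at each remaining vertex, which is exactly the assignment we need (and it indeed forces $u_0w=q=n+1$ and $v_0w=p=m+1$). Connectivity is where coprimality does the work: every edge of $G^{*}$ joins vertices differing by $p$ or $q$, so from any color $c$ one may repeatedly subtract $p$ or $q$ (staying within $[0,p+q-2]$) to reach a color smaller than $\min(p,q)$, and Bézout's identity with $\gcd(p,q)=1$ then links these small colors back to $0$; conversely, if $d=\gcd(p,q)>1$ every edge preserves residues modulo $d$, so $G^{*}$ splits into at least two components. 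I expect this connectivity statement to be the main obstacle, as it is the only place requiring a genuine number-theoretic argument rather than bookkeeping.

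Finally I would assemble the coloring and verify it directly: it recolors no edge of $K_{m,n}$; each added color $i$ or $i+n+1$ (resp.\ $j$ or $j+m+1$) lies just outside the existing spectrum of $u_i$ (resp.\ $v_j$), so every spectrum stays an interval and no adjacency at $u_i$ or $v_j$ is violated; the colors at $w$ are distinct by construction, so the coloring is proper; and since they exhaust $\{1,\dots,m+n\}$, all $m+n$ colors occur and $S(w,\alpha)$ is a full interval. Hence the extended $\alpha$ is an interval $(m+n)$-coloring of $K_{1,m,n}$.
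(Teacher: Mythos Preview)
Your approach is correct and takes a somewhat different route than the paper. The paper builds a \emph{bipartite} auxiliary graph $H$ with one part indexing the $w$-edges and the other part indexing the colors $1,\dots,m+n$, and seeks a perfect matching; it then observes that $H$ consists of even cycles together with two paths, and uses a coordinate (``abscissa'') argument to show, via $\gcd(m+1,n+1)=1$, that the two degree-$1$ vertices $u'_0,v'_0$ lie on different paths, forcing both paths to be odd. Your graph $G^{*}$ is essentially what one gets from $H$ by adjoining a dummy color-$0$ vertex adjacent to $u'_0,v'_0$ and then suppressing every degree-$2$ ``edge'' vertex; a perfect matching in $H$ corresponds exactly to your rooted orientation of $G^{*}$. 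Recasting the problem as ``show $G^{*}$ is a tree and orient it away from $0$'' is clean and makes the vertex/edge count do visible work. One caution on the connectivity step: invoking B\'ezout alone is not quite sufficient, because an arbitrary relation $ap+bq=1$ need not correspond to a walk that stays inside $[0,p{+}q{-}2]$. A safe way to close the gap is to note that from any $c\in\{0,\dots,p-2\}$ one may add $q$ once and then subtract $p$ repeatedly (all intermediate values remaining in $[0,p{+}q{-}2]$) to reach $(c+q)\bmod p$; since $\gcd(p,q)=1$, the map $c\mapsto (c+q)\bmod p$ is a single $p$-cycle on $\mathbb{Z}_p$, so these moves already connect all of $\{0,\dots,p-1\}$, while every larger vertex reaches this set by repeatedly subtracting $p$ or $q$. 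This is the same number-theoretic content as the paper's abscissa argument, packaged differently.
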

\begin{proof}
We color the edges $u_iv_j$ of $K_{1,m,n}$ the same way as in $\alpha_{m,n}$. In
order to prove the theorem it is sufficient to show that it is possible to color
the remaining edges in a way that the following conditions are met:
\begin{description}
\item[(1)] spectrums of vertices $u_i$ and $v_j$ remain intervals of integers
\item[(2)] spectrum of the vertex $w$ is also an interval of integers
\end{description}
We construct an auxillary bipartite graph $H$ which has a bipartition $(B,C)$
where $B$ corresponds to the edges $u_iw$ and $v_jw$, and $C$ corresponds to the
colors that will be used to color those edges.
\begin{center}
$B = \left\{u'_i\ |\ 0 \leq i \leq m-1\right\} \cup \left\{v'_i\ |\ 0 \leq i
\leq n-1\right\}$
\end{center}
where $u'_i$ and $v'_j$ correspond, respectively, to $u_iw$ and $v_jw$ in
$E(K_{1,m,n})$.

\begin{center}
$C = \left\{c_k\ |\ 1 \leq k \leq m+n\right\}$
\end{center}
where $c_k$ corresponds to the color $k$. We will join the vertices $b\in B$ and
$c_k \in C$ if and only if we allow the edge corresponding to $b$ to receive the
color $k$. Note that $|B|=|C|=m+n$.

\begin{figure}[h]
\centering
\includegraphics[width=\textwidth]{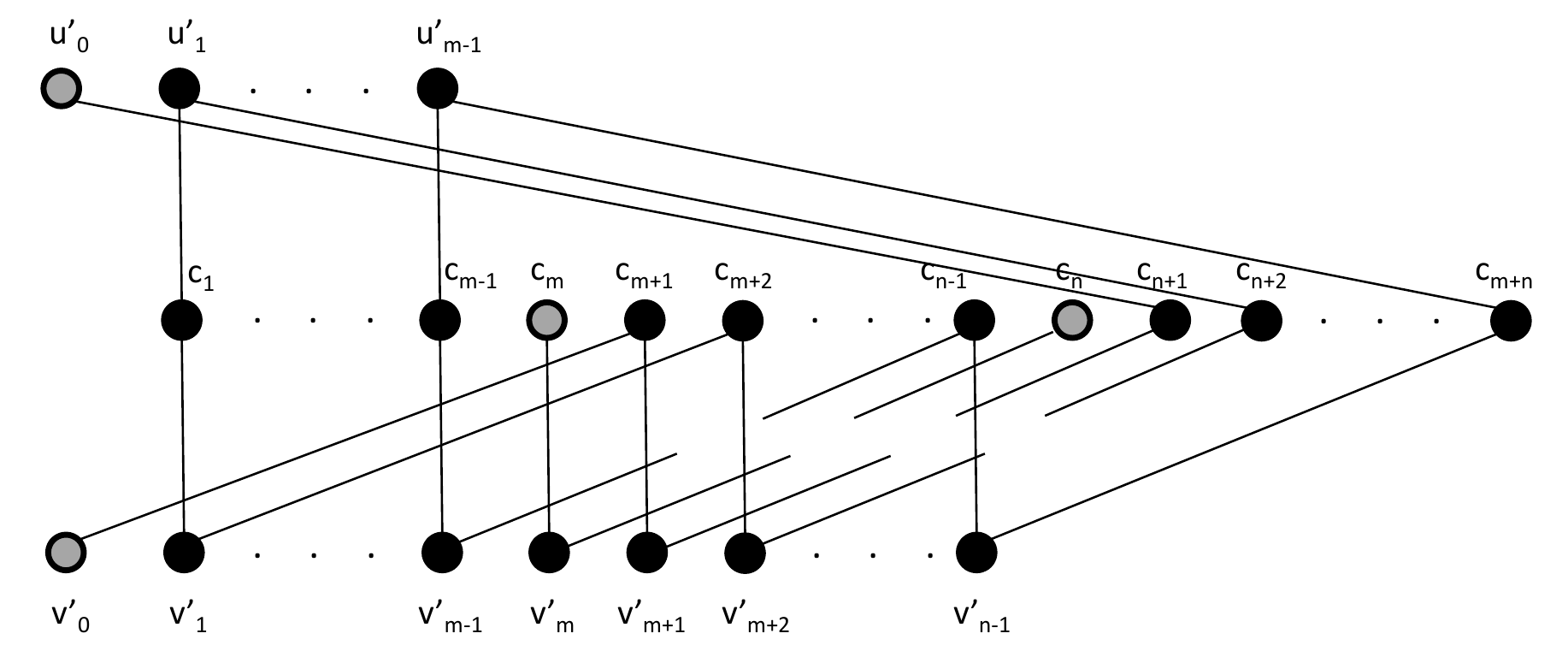}
\caption{Auxillary graph $H$}
\label{graphH}
\end{figure}

$S(u_0,\alpha_{m,n}) = \{1,\ldots,n\}$, so in order to satisfy the condition
\textbf{(1)} the edge $u_0w$ can only receive the color $n+1$ (we don't want to
allow color $0$). Similarly, $v_0w$ can only be colored by $m+1$. For $u_iw$
($1\leq i\leq m-1$) we have two options: either $i$ or $i+n+1$. For $v_jw$ ($1
\leq j \leq n-1$) we allow colors $j$ and $j+m+1$. Therefore,
\begin{align*}
E(H) = &\left\{u'_ic_i\ |\ 1 \leq i \leq m-1 \right\} \cup
\left\{u'_ic_{i+n+1}\ |\ 0 \leq i \leq m-1 \right\} \cup \\
\cup &\left\{v'_jc_j\ |\ 1 \leq j \leq n-1 \right\} \cup
\left\{v'_jc_{j+m+1}\ |\ 0 \leq j \leq n-1 \right\}
\end{align*}

Suppose $M$ is a matching in $H$. For each $bc_k \in M$ we color the edge of
$K_{1,m,n}$ corresponding to vertex $b$ by color $k$. If $M$ is a perfect
matching all remaining edges of $G$ will be colored and all the colors will be
used. So, the spectrum of vertex $w$ will be $\{1,\ldots,m+n\}$ and the
condition \textbf{(2)} will be satisfied. Condition \textbf{(1)} will be
satisfied because of the construction of $H$.

So, to complete the proof we show that $H$ has a perfect matching.

Without loss of generality we can assume that $m<n$. $m=n$ case is excluded
because $\gcd(n+1,n+1) \not= 1$. From the construction of graph $H$ it follows,
that all vertices have a degree $2$, except for four vertices, which have degree
$1$, namely $u'_0$, $v'_0$, $c_m$ and $c_n$. Therefore, $H$ consists of several
even cycles (as it is bipartite) and 2 simple paths. $H$ will have a perfect
matching if both paths have odd length. Therefore, $u'_0$ and $v'_0$ must belong
do distinct paths.

Suppose, to the contrary, that $u'_0$ and $v'_0$ belong to the same path $P$. We
introduce a coordinate system and embed the graph $H$ the following way:
coordinates for vertices $u'_i$, $v'_i$ and $c_i$ are $(i, 1)$, $(i, -1)$ and
$(i, 0)$ respectively (Figure \ref{graphH}). We split the edges of $P$ into two
groups. First group contains edges of type $u'_ic_{i+n+1}$ and $v'_ic_i$ and
second group contains the remaining edges. Note that each edge of the path $P$
has only neighbors from the other group. If we begin to traverse the path $P$
starting at the vertex $u'_0$ we go down only along non-vertical edges and go up
only along vertical edges. Suppose we moved along the edges of type
$u'_ic_{i+n+1}$ $a$ times, each time increasing the abscissa by $n+1$, and moved
along the edges of type $c_{j+m+1}v'_j$ $b$ times, each time decreasing the
abscissa by $m+1$. Moving along the vertical edges does not change the abscissa.
The last vertex of the path is $v'_0$ which has an abscissa of $0$, therefore we
obtain the following equation:
\begin{center}
$a(n+1) - b(m+1) = 0$
\end{center}
Note that $a\leq m$ and $b \leq n$. Moreover, $\gcd(m+1, n+1)=1$, so we have
$a=b=0$. The path $P$ has no edges, which is a contradiction.
\end{proof}

\begin{theorem}
If $\gcd(m+1,n+1)>1$ then $K_{1,m,n}$ is not interval colorable.
\end{theorem}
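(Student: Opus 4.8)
The plan is to argue by contradiction: assume $K_{1,m,n}$ admits an interval coloring $\alpha$, and set $d=\gcd(m+1,n+1)\ge 2$. The starting observation is that the spectrum of each vertex of $U$ and of $V$ is an interval whose length is divisible by $d$, since $d(u_i)=n+1$ and $d(v_j)=m+1$. An interval of consecutive integers of length divisible by $d$ contains exactly the same number of representatives of every residue class modulo $d$. Consequently, among the edges incident to any fixed $u_i$ exactly $(n+1)/d$ receive a color congruent to $r$ modulo $d$, and among those incident to any fixed $v_j$ exactly $(m+1)/d$ do, for each residue $r$. I would also record that $m\equiv n\equiv -1\pmod d$, whence $2mn+m+n\equiv 0\pmod d$.

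First I would run a double count on incidences. Fix a residue $r$ modulo $d$, let $N_r$ denote the number of edges of $K_{1,m,n}$ whose color is congruent to $r$, and let $e_r$ denote the number of edges incident to the apex $w$ whose color is congruent to $r$. Since every edge counted in $N_r$ meets exactly two vertices, summing the per-vertex counts above over $U$, over $V$, and at $w$ yields
\[
2N_r=\frac{m(n+1)}{d}+\frac{n(m+1)}{d}+e_r,
\]
so that $e_r=2N_r-K$ with $K=\dfrac{2mn+m+n}{d}\in\mathbb{Z}$. The decisive consequence is purely arithmetic: every $e_r$ has the same parity, namely that of $K$.

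I would then confront this with the spectrum of $w$, which is itself an interval of length $d(w)=m+n\equiv -2\pmod d$. In an interval of length $L$ each residue class modulo $d$ occurs either $\lfloor L/d\rfloor$ or $\lceil L/d\rceil$ times, the larger count being attained exactly $s:=(L\bmod d)$ times. For $L=m+n$ we have $s=d-2$. When $d\ge 3$ we have $1\le s\le d-1$, so both multiplicities genuinely occur and differ by $1$; hence two of the $e_r$ have opposite parity, contradicting the conclusion of the previous paragraph. This settles the generic case.

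The main obstacle is the borderline case $d=2$, where $s=0$ and the residues split evenly, so the mixed-parity contradiction evaporates. Here every $e_r$ equals $(m+n)/2$, and the parity constraint $e_r\equiv K\pmod 2$ becomes $(m+n)/2\equiv mn+(m+n)/2\pmod 2$, i.e. $mn\equiv 0\pmod 2$. But $d=2$ forces $m+1$ and $n+1$ even, so $m$ and $n$ are both odd and $mn$ is odd, a contradiction. Thus no interval coloring exists for any $d\ge 2$, completing the proof. Note that the argument is symmetric in $m$ and $n$, so no normalization is needed, and it requires only elementary residue counting together with the finer parity computation reserved for $d=2$.
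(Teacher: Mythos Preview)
Your proof is correct and rests on the same underlying idea as the paper's: double-count the edges whose color lies in a fixed residue class modulo $d$, exploiting that $d\mid d(u_i)=n+1$ and $d\mid d(v_j)=m+1$, to force a parity contradiction at the apex $w$. The paper's execution is more economical, however: it first shifts all colors so that $S(w,\beta)=\{1,\ldots,m+n\}$, and then counts only the single residue class $0\pmod d$. With this normalization the count at $w$ is pinned down exactly as $\lfloor(m+n)/d\rfloor=(m+n+2)/d-1$, and the handshake sum $\sum_v D(v)=2(m+1)(n+1)/d-1$ is visibly odd, giving the contradiction in one line with no case distinction. Your version avoids the shift but pays for it with the separate handling of $d=2$; the trade-off is that the paper's normalization collapses your two cases into one.
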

\begin{proof}
Suppose, to the contrary, that $\gcd(m+1,n+1) = d >1$ and $\beta$ is an
interval-edge coloring of $K_{1,m,n}$. We call an edge $e \in E(K_{1,m,n})$ a
"$d$-edge" if $\beta(e) = dx$ for some $x \in \mathbb{Z}$. We denote by $D(v)$
the number of $d$-edges incident to the vertex $v \in V(K_{1,m,n})$.

Without loss of generality we assume that $S(w,\beta) = \{1,\ldots,m+n\}$
(otherwise we will shift the colors of all edges by the same amount in a way
that the spectrum of vertex $w$ starts with color $1$). Therefore,
\begin{align*}
D(w) = \left\lfloor \frac{m+n}{d} \right\rfloor = \frac{m+n+2}{d}-1
\end{align*}
$\left| S(u_i, \beta) \right| = n+1$ for all $0\leq i \leq m-1$, and $\left|
S(v_j, \beta) \right| = m+1$ for all $0\leq j \leq n-1$. Therefore,
\begin{align*}
D(u_i) = \frac{n+1}{d},\ 0\leq i \leq m-1\\
D(v_j) = \frac{m+1}{d},\ 0\leq j \leq n-1
\end{align*}
The sum of $D(v)$ over all vertices $v \in V(K_{1,m,n})$ must give twice the
number of $d$-edges in the graph.
\begin{align*}
D = \sum_{v \in V(K_{1,m,n})}{D(v)} = \frac{m+n+2}{d}-1 + m\frac{n+1}{d} +
n\frac{m+1}{d} = \frac{2(m+1)(n+1)}{d} - 1
\end{align*}
This is a contradiction, because $D$ is an odd number.
\end{proof}

\section{Future work}\

There are two natural ``next steps'' after finding the condition for
colorability of $K_{1,m,n}$. First one is to find the exact number of colors needed
to color $K_{1,m,n}$ and the second is to generalize the statement to
colorability of $K_{k,m,n}$ for $k>1$. Past work on those problems led us to the following conjectures.

\begin{conjecture}
Graph $K_{1,m,n}$ has an interval $t$-coloring if and only if $t=m+n$ and $\gcd(m+1,n+1)=1$.
\end{conjecture}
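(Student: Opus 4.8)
The plan is to prove the stronger rigidity statement that whenever $K_{1,m,n}$ is interval colorable, all of its interval colorings use exactly $m+n$ colors, i.e. $w(K_{1,m,n})=W(K_{1,m,n})=m+n$. The equivalence with the condition $\gcd(m+1,n+1)=1$ is already settled by the two theorems above: they give colorability precisely in the coprime case, and in that case they produce an $(m+n)$-coloring. Since $d(w)=m+n=\Delta(K_{1,m,n})$, every interval coloring uses at least $\Delta=m+n$ colors, so $w(K_{1,m,n})=m+n$. Hence the only missing ingredient is the upper bound $W(K_{1,m,n})\le m+n$, and the whole conjecture reduces to this.

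To prove $W\le m+n$, I would fix an interval $t$-coloring $\beta$ and normalise the colours to $1,\dots,t$. The spectrum $S(w,\beta)$ is an interval of length exactly $m+n$, so it suffices to show that both the smallest colour $1$ and the largest colour $t$ are attained on edges incident to $w$: then $\{1,t\}\subseteq S(w,\beta)$ forces $t-1\le m+n-1$. By the colour-reversal symmetry $c\mapsto t+1-c$ (which sends interval colorings to interval colorings and interchanges the two extreme colours), it is enough to establish the single claim that colour $1$ lies on some edge $u_iw$ or $v_jw$; equivalently $l(w):=\min S(w,\beta)=1$, equivalently $S(w,\beta)=\{1,\dots,m+n\}$.

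First I would argue by contradiction, assuming $l(w)=p\ge 2$, so that every $w$-edge receives a colour $\ge p$ while colour $1$ occurs only on edges of the underlying $K_{m,n}$. If $u_iv_j$ is such an edge, then $1\in S(u_i,\beta)\cap S(v_j,\beta)$ forces $S(u_i,\beta)=\{1,\dots,n+1\}$ and $S(v_j,\beta)=\{1,\dots,m+1\}$, which confines the colours of $u_iw$ and $v_jw$ to the ranges $[p,n+1]$ and $[p,m+1]$ and therefore fixes the colours of all edges at $u_i$ and at $v_j$ up to a single missing value. Propagating these interval constraints along triangles $wu_iv_j$ --- note that in $K_{1,m,n}$ every triangle passes through $w$ --- I would track how the forced spectra of the $u$- and $v$-vertices must nest inside the length-$(m+n)$ interval $S(w,\beta)$, and show that the colours below $p$ cannot all be accommodated, contradicting $p\ge 2$. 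A complementary route is to generalise the parity computation of the non-colorability theorem: counting, for a suitable modulus, the edges of each colour class against the forced spectra should again yield an arithmetic obstruction unless $S(w,\beta)$ is the full palette.

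The main obstacle is precisely this upper bound. The target value $m+n$ equals $|V(K_{1,m,n})|-1$, which is exactly the Asratian--Kamalian bound $W(G)\le|V(G)|-1$ for connected triangle-free graphs; but $K_{1,m,n}$ contains triangles, so that theorem does not apply and its proof cannot be quoted verbatim. The real content of the conjecture is therefore to recover the same bound from the very special structure of $K_{1,m,n}$ --- the fact that all triangles share the apex $w$ and that the two sides have prescribed degrees $n+1$ and $m+1$ --- rather than from triangle-freeness. I expect the delicate step to be controlling the three families of spectra simultaneously, since deleting $w$ need not leave an interval coloring of $K_{m,n}$, and so the known value $W(K_{m,n})=m+n-1$ cannot be invoked directly.
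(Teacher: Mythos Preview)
The statement you are attempting is Conjecture~2 in the paper's ``Future work'' section; the paper does \emph{not} prove it and offers no argument for it, so there is no proof to compare against. Your reduction is correct: the two theorems of the main section already give the equivalence of colorability with $\gcd(m+1,n+1)=1$ together with an explicit $(m+n)$-coloring, and the degree of $w$ forces $w(K_{1,m,n})=m+n$ whenever the graph is colorable. The entire content of the conjecture is therefore the upper bound $W(K_{1,m,n})\le m+n$, exactly as you say.

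However, your proposal does not actually establish this upper bound. You set up the contradiction hypothesis correctly (if $l(w)=p\ge 2$ then colour $1$ sits on some $u_iv_j$, forcing $S(u_i,\beta)=\{1,\dots,n+1\}$ and $S(v_j,\beta)=\{1,\dots,m+1\}$), but the decisive step --- ``propagating these interval constraints along triangles \dots\ and show that the colours below $p$ cannot all be accommodated'' --- is asserted rather than carried out. No mechanism is given for why the propagation must terminate in a contradiction, and the alternative ``parity computation'' route is likewise only named, not executed. Since you yourself flag this bound as ``the main obstacle'' and correctly observe that neither the Asratian--Kamalian triangle-free bound nor the value $W(K_{m,n})=m+n-1$ can be invoked, what you have written is an accurate diagnosis of what remains to be proved, not a proof. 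As it stands the conjecture is open, both in the paper and in your write-up.
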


\begin{conjecture}
Graph $K_{k,m,n}$, where $k \le m \le n$ and $n > k+m$ is interval colorable if and only if graph $K_{k,m,n-k-m}$ is interval colorable.
\end{conjecture}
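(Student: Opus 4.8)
The plan is to prove the two implications separately. Throughout, write $A$, $B$, $C$ for the three parts of $K_{k,m,n}$, of sizes $k$, $m$, $n$, and note that $\Delta(K_{k,m,n})=m+n$ is attained on $A$, while the hypothesis $n>k+m$ forces the part $C$ to be strictly larger than $A\cup B$. The small graph $K_{k,m,n-k-m}$ is obtained from the large one by deleting $k+m$ vertices of $C$, and conversely. The structural observation on which everything rests is that each such extra $C$-vertex is adjacent to \emph{every} vertex of $A\cup B$ and to no other new vertex, so the edges joining the $k+m$ extra $C$-vertices to $A\cup B$ form exactly a balanced complete bipartite graph $K_{k+m,\,k+m}$; all the work concentrates there.

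For \textbf{sufficiency} ($K_{k,m,n-k-m}\in\mathfrak{N}\Rightarrow K_{k,m,n}\in\mathfrak{N}$) I would start from an interval coloring $\gamma$ of the small graph and color the new copy of $K_{k+m,k+m}$ so that each new $C$-vertex gets an interval spectrum, while the spectrum of every $z\in A\cup B$, currently an interval $[\ell_z,r_z]$, is prolonged by having its $k+m$ new colors fill a block immediately below $\ell_z$, immediately above $r_z$, or a prefix of each. In the model case where the top ends $r_z$ all coincide, one colors $K_{k+m,k+m}$ properly with the single block of $k+m$ colors just above the common value, and each new $C$-vertex then sees that whole block, i.e.\ an interval. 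To reach this model case I would prove a normal-form lemma that shifts the spectra of $A\cup B$ into alignment, and then realize the required coloring of $K_{k+m,k+m}$ by a rotation/matching argument in the spirit of the auxiliary graph $H$ used above for $K_{1,m,n}$.

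For \textbf{necessity} ($K_{k,m,n}\in\mathfrak{N}\Rightarrow K_{k,m,n-k-m}\in\mathfrak{N}$) I would take an interval coloring $\beta$ of the large graph and delete $k+m$ vertices of $C$ so that the restriction of $\beta$ is still an interval coloring. Since $C$ is independent, deleting $C$-vertices leaves the spectra of the surviving $C$-vertices untouched, so only the spectra of $A\cup B$ are at risk: deleting a $C$-vertex removes one color from each $z\in A\cup B$, and for the spectra to remain intervals the $k+m$ colors removed at each $z$ must constitute a prefix together with a suffix of its current interval. The task is thus to choose $k+m$ vertices of $C$ that simultaneously trim only the two ends of the spectrum of \emph{every} vertex of $A\cup B$, which I would pursue through a Hall/augmenting-path argument on the incidence structure between $C$ and the ``ends'' of the $A\cup B$-spectra, using $n>k+m$ to guarantee enough room and a parity count in the style of the non-colorability proof to exclude the obstructed configurations.

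I expect necessity to be the main obstacle: appending vertices only requires building one new coloring, whereas the restriction must respect the two ends of $k+m$ prescribed intervals at once, and a single $C$-vertex whose incident colors sit in the interior of some $A\cup B$-spectrum blocks the deletion. The heart of the argument will therefore be a structural statement --- a normal form for interval colorings of $K_{k,m,n}$ in which the spectra of the large part $C$ slide through the color range periodically with period $k+m$ --- from which both the alignment needed for sufficiency and the simultaneous end-trimming needed for necessity would follow; establishing that periodicity, rather than the matching and counting built on top of it, is where the real difficulty lies. This is also consistent with the case $k=1$, where the condition $\gcd(m+1,n+1)=1$ is exactly invariant under the Euclidean step $n\mapsto n-(k+m)$.
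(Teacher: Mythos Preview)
The paper does not prove this statement: it is listed as Conjecture~3 in the ``Future work'' section and is explicitly left open. There is therefore no proof in the paper to compare your attempt against.

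Your proposal is not a proof but a strategy sketch, and you are candid about this: you identify the key structural lemma you would need (a periodic normal form for interval colorings of $K_{k,m,n}$ with period $k+m$ on the large part) and you acknowledge that establishing it ``is where the real difficulty lies.'' That is an honest assessment. The sufficiency direction you outline is plausible in spirit, but the ``normal-form lemma that shifts the spectra of $A\cup B$ into alignment'' is asserted rather than proved, and there is no obvious reason the top ends $r_z$ can be made to coincide in general. The necessity direction is even more speculative: the existence of $k+m$ vertices of $C$ whose deletion simultaneously trims only the ends of every $A\cup B$-spectrum is exactly the content of the conjecture in that direction, and invoking ``a Hall/augmenting-path argument'' does not by itself supply the combinatorial condition needed for Hall's theorem to apply. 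In short, you have correctly located the two pressure points of the problem, but neither has been resolved, and the paper's authors evidently did not resolve them either.
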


\begin{conjecture}
Graph $K_{k,m,n}$, where $k \le m \le n$ and $n \le k+m$ is interval colorable if and only if the sum $k+m+n$ is even.
\end{conjecture}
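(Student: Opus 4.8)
The plan is to prove the two implications separately. Write $A,B,C$ for the parts of sizes $k,m,n$; since $k\le m\le n$ we have $\Delta(K_{k,m,n})=m+n$, so every vertex of $A$ has degree $m+n$, every vertex of $B$ degree $k+n$, and every vertex of $C$ degree $k+m$. For the constructive direction I would aim at an interval coloring with exactly $m+n$ colors, in which every vertex of $A$ is forced to see the whole palette $\{1,\dots,m+n\}$.

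For the ``only if'' direction I would reuse the counting that rules out $K_{1,m,n}$ when $\gcd(m+1,n+1)>1$, specialized to the modulus $d=2$: call an edge \emph{even} if its color is even. If $k+m+n$ is odd, then either all of $k,m,n$ are odd, or exactly one of them is. In the first case the three degrees $m+n,\,k+n,\,k+m$ are all even, so in any interval coloring each vertex is incident to exactly half even edges; counting incidences, twice the number of even edges equals $\tfrac12\sum_{v}d(v)=|E|=km+kn+mn$, which is odd, a contradiction. The case in which exactly one of $k,m,n$ is odd is the delicate one: then two of the three part-types consist of odd-degree vertices, and for a spectrum of odd length the number of even colors depends on the parity of its least element, so the plain count no longer closes. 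Here I would try to constrain the smallest colors $a_v$ of the spectra through the overlaps forced between intervals of adjacent vertices, or switch to a weighted count tuned to $k,m,n$; I expect this subcase to demand the most care on the necessity side.

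For the ``if'' direction I would build the coloring in the spirit of $\alpha_{m,n}$ combined with the auxiliary-matching technique used to color $K_{1,m,n}$. First color the copy of $K_{m,n}$ joining $B$ and $C$ by a suitable shift of $\alpha_{m,n}$, and then color the $k(m+n)$ edges leaving $A$ so that each vertex of $A$ becomes rainbow while the spectra of the vertices of $B$ and $C$ stay intervals. Exactly as in the construction of the graph $H$, the colors admissible for each such edge form a short list forced by the interval requirement, so the task reduces to selecting a compatible family of matchings; the hypothesis $n\le k+m$ should ensure that all the required intervals fit inside $\{1,\dots,m+n\}$, while the parity assumption that $k+m+n$ is even should be precisely what makes these matchings exist, playing the role that the odd-length-path condition played for $K_{1,m,n}$. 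I expect this construction to be the main obstacle: unlike the single new vertex $w$ in $K_{1,m,n}$, here all $k$ vertices of $A$ must be made rainbow simultaneously, which couples the colorings of the three complete bipartite pieces and turns one perfect matching into a global feasibility question, presumably to be settled by Hall's condition or a flow argument keyed to the parity of $k+m+n$.
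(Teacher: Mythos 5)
The statement you were asked to prove is not proved in the paper at all: it is the last conjecture of the ``Future work'' section, stated as an open problem (the companion of the preceding conjecture, which treats the complementary range $n>k+m$). So there is no proof in the paper to compare yours against; anything that actually settles it would go beyond the paper. Judged on its own terms, your text is a research plan rather than a proof, and both directions have genuine gaps. On the necessity side, your parity count does close the subcase where $k,m,n$ are all odd: then the degrees $m+n$, $k+n$, $k+m$ are all even, an interval of even length $2s$ contains exactly $s$ even colors, so twice the number of even-colored edges would equal $|E|=km+kn+mn$, which is odd --- a correct adaptation, with $d=2$, of the counting in the paper's impossibility theorem for $K_{1,m,n}$, and a genuine partial result. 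But the other subcase (exactly one of $k,m,n$ odd) is left completely open, and you yourself identify why the plain count fails there: a vertex of odd degree sees $\lceil d(v)/2\rceil$ or $\lfloor d(v)/2\rfloor$ even colors depending on the parity of the least element of its spectrum, and you have no control over those parities. The proposed fixes (a weighted count, constraints on the least spectrum elements) are named but never carried out.

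On the sufficiency side the gap is larger. You describe the shape of a construction --- color the $K_{m,n}$ between the parts of sizes $m$ and $n$ by a shift of $\alpha_{m,n}$, then extend over the $k(m+n)$ edges leaving the small part via admissible-color lists and matchings as in the paper's auxiliary graph $H$ --- but you do not define the lists, do not exhibit the matchings, and do not show where the hypotheses $n\le k+m$ and $2\mid(k+m+n)$ enter. With $k\ge 2$ the paper's trick does not transfer directly: every vertex of the small part has degree $m+n$ and must therefore be rainbow over the whole palette $\{1,\ldots,m+n\}$, and distinct such vertices compete for the same scarce extension colors at each vertex of the other two parts. This is exactly the coupling you flag as ``the main obstacle,'' and it is left unresolved: the single perfect matching that sufficed for $k=1$ becomes a system of $k$ pairwise compatible perfect matchings, and nothing in the proposal establishes that such a system exists. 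In short, the proposal is a reasonable plan of attack containing one correct partial lemma, for a problem the paper explicitly leaves open --- it is not a proof.
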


It can be seen that the last two conjectures generalize the proved statement for colorability of $K_{1,m,n}$.

\end{document}